\title{Hamiltonian Groups with Perfect Order Classes}
\author{James McCarron}
\address{Maplesoft\\
615 Kumpf Drive,\\
Waterloo, ON, Canada\\
N2V 1K8}
\email{\texttt{james@maplesoft.com}}
\date{\today}
\theoremstyle{plain}
\newtheorem{theorem}{Theorem}[section]
\newtheorem{proposition}[theorem]{Proposition}
\newtheorem{corollary}[theorem]{Corollary}
\newtheorem{lemma}[theorem]{Lemma}
\newtheorem*{theorem*}{Theorem} 
\newtheorem*{maintheorem}{Main Theorem}
\numberwithin{theorem}{section}
\newcommand{\defn}[1]{\textbf{#1}}
\newcommand{\abs}[1]{\ensuremath{\left\lvert #1\right\rvert}}
\newcommand{\order}[1]{\ensuremath{\abs{#1}}} 
\newcommand{\iso}{\ensuremath{\simeq}}
\newcommand{\sym}[1]{\ensuremath{S_{#1}}}
\newcommand{\alt}[1]{\ensuremath{A_{#1}}}
\newcommand{\cyclic}[1]{\ensuremath{\operatorname{C}_{#1}}}
\newcommand{\quat}{\ensuremath{Q}}
\DeclareMathOperator{\eulerphi}{\phi}
\let\divides\mid
\let\congruent\equiv
\newcommand{\ordclass}[2]{\ensuremath{{#1}^{[{#2}]}}}
\newcommand{\ordcount}[2]{\ensuremath{f_{#1}(#2)}}
\begin{document}

\begin{abstract}
A finite group is said to have \textit{perfect order classes}
if the number of elements of any given order is either
zero or a divisor of the order of the group.
A group is \textit{Hamiltonian} if it is non-abelian and every
one of its subgroups is normal.

The purpose of this note is to describe precisely the
finite Hamiltonian groups with perfect order classes.
We show that a finite Hamiltonian group has perfect order
classes if, and only if, it is isomorphic to the direct
product of the quaternion group of order $8$, a non-trivial
cyclic $3$-group and a group of order at most $2$.

\begin{theorem*}
A finite Hamiltonian group $G$ has perfect order classes if,
and only if, it is isomorphic either to $\quat\times\cyclic{3^k}$
or to $\quat\times\cyclic{2}\times\cyclic{3^k}$,
for some positive integer $k$.
\end{theorem*}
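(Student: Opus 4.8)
The plan is to reduce everything to arithmetic via the Dedekind--Baer description of finite Hamiltonian groups, and then squeeze enough divisibility constraints out of the element--order counting function to pin down $G$. By the Dedekind--Baer theorem a finite Hamiltonian group is $G\cong Q\times\cyclic{2}^{m}\times B$, where $Q$ is the quaternion group of order $8$, $m\geq 0$, and $B$ is abelian of odd order. Every element of $Q\times\cyclic{2}^{m}$ has order $1$, $2$, or $4$, and there are respectively $1$, $2^{m+1}-1$, and $3\cdot 2^{m+1}$ of these; every element of $B$ has odd order. Hence every element of $G$ has order $2^{i}d$ with $i\in\{0,1,2\}$ and $d$ odd, and $\ordcount{G}{2^{i}d}=\alpha_{i}\,\ordcount{B}{d}$ with $\alpha_{0}=1$, $\alpha_{1}=2^{m+1}-1$, $\alpha_{2}=3\cdot 2^{m+1}$. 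Since $\abs G=2^{m+3}\abs B$, the hypothesis that $G$ has perfect order classes is exactly that $\alpha_{i}\,\ordcount{B}{d}\divides 2^{m+3}\abs B$ for all admissible $i$, $d$ with $\ordcount{B}{d}>0$.

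From here I would first harvest the cheap consequences. Taking $i=2$ and $d=1$ gives $3\cdot 2^{m+1}\divides 2^{m+3}\abs B$, so $3\divides\abs B$ and $B\neq 1$. Taking $i=2$ and $d$ equal to the product of the distinct prime divisors of $\abs B$: that $d$ is squarefree, so $\ordcount{B}{d}$ is a product of terms $p^{t}-1$, one per prime $p\divides\abs B$ with $t$ the corresponding Sylow rank, each even; comparing $2$-adic valuations with $\abs G$ then forces $\abs B$ to have at most two prime divisors. As $3$ is one of them, either $\abs B=3^{k}$ or $\abs B=3^{a}p^{b}$ for a single prime $p\geq 5$.

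The heart of the argument is to eliminate the case $p\geq 5$ and to force cyclicity of the Sylow subgroups of $B$. For this I would exploit the conditions attached to the orders $p$, $4p$, $12$, $4e$ (with $e$ the exponent of $B$), and small mixed orders such as $12p^{b}$: dividing $\abs G$ by each of the corresponding counts and cancelling the common powers of $p$ and $3$ yields constraints forcing each relevant $q^{t}-1$ (with $q\in\{3,p\}$ and $t$ a Sylow rank) to be built only from the primes permitted by $\abs G$ and to have $2$-adic valuation at most $2$. An elementary analysis of such factorisations then kills all ranks $t>1$: if $t$ is even, $p^{t}-1=(p^{t/2}-1)(p^{t/2}+1)$ is a product of consecutive even integers, hence has $2$-adic valuation at least $3$; if $t$ is odd and $t>1$, $1+p+\cdots+p^{t-1}$ is an odd integer whose only admissible prime factor would be $3$ (or $p$), which is incompatible with its size together with the valuation identity $v_{3}(p^{t}-1)=v_{3}(p-1)+v_{3}(t)$. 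So every Sylow subgroup of $B$ is cyclic, whence $B=\cyclic n$ is cyclic; the condition from $d=n$, $i=2$ now reads $3\eulerphi(n)\divides 4n$, which forces $(q-1)\divides 2$ for every prime $q\geq 5$ dividing $n$, an impossibility. Therefore no prime $\geq 5$ divides $\abs B$, and $B\cong\cyclic{3^{k}}$ with $k\geq 1$.

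Finally, $\ordcount{G}{2}=2^{m+1}-1$ must divide $\abs G=2^{m+3}3^{k}$, hence must divide $3^{k}$; but $2^{m+1}-1$ is a power of $3$ only for $m=0$ and $m=1$ (for $m\geq 2$ it is $\congruent 7\pmod 8$, while powers of $3$ are $\congruent 1$ or $3\pmod 8$). This yields $G\iso Q\times\cyclic{3^{k}}$ or $G\iso Q\times\cyclic 2\times\cyclic{3^{k}}$. For the converse one checks directly, using the counting formula of the first paragraph, that in each of these two families every one of the finitely many element orders $2^{i}3^{j}$ has a count dividing $\abs G$. I expect the genuine obstacle to be the third paragraph: selecting exactly the right element orders and carrying out the case analysis of the factorisations $q^{t}-1$ cleanly, rather than any single hard idea.
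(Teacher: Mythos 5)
Your route is essentially the paper's: the decomposition $G\iso Q\times\cyclic{2}^{m}\times B$, the counts $1$, $2^{m+1}-1$, $3\cdot 2^{m+1}$ for orders $1,2,4$ in $Q\times\cyclic{2}^{m}$ and the product formula for mixed orders (Lemma~\ref{lem:ordcount}), the deduction $3\divides\abs{B}$ from the order-$4$ class, the bound of at most two odd primes via $2$-adic valuations (Lemma~\ref{lem:step1}), and the endgame $2^{m+1}-1=3^{n}$ forcing $m\in\{0,1\}$ (Lemma~\ref{lem:step3}) all match. The converse verification is fine, and your elimination of a prime $p\geq 5$ from a \emph{cyclic} $B$ via $3\eulerphi(n)\divides 4n$, hence $(p-1)\divides 2$, is correct and rather cleaner than the paper's case split on $p\congruent\pm 1\pmod{6}$.

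The gap is in the ``elementary analysis of the factorisations $q^{t}-1$'' that is supposed to force every Sylow rank of $B$ to be $1$. The even-rank case is fine, and your argument for odd rank $t>1$ (the cofactor $1+q+\cdots+q^{t-1}$ is odd, supported only on the admissible primes, and too large in view of $v_{3}(p^{t}-1)=v_{3}(p-1)+v_{3}(t)$) works when $q=p$, but it says nothing when $q=3$: there the cofactor $(3^{\lambda}-1)/2$ must be a power of $p$, and this equation has genuine solutions --- $\lambda=3$ gives $(3^{3}-1)/2=13$, so a Sylow $3$-subgroup of rank $3$ together with $p=13$ passes every test you state ($v_{2}(3^{3}-1)=1\leq 2$ and the odd part is the permitted prime $13$). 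Closing this case needs an extra arithmetic input, which is exactly the crux of the paper's Lemma~\ref{lem:step2}: the multiplicative order of $3$ modulo $p$ divides both the odd number $\lambda$ and $p-1=2^{x}3^{y}$, hence is a nontrivial power of $3$, so $3\divides\lambda$, $13\divides(3^{\lambda}-1)/2=p^{\alpha}$, $p=13$, and then $4\divides p-1$ makes $\ordcount{12p}{G}$ divisible by $2^{m+4}$, a contradiction. You do list the order $12p^{b}$ among the counts to exploit --- and its constraint $v_{2}(3^{\lambda}-1)+v_{2}(p^{t}-1)\leq 2$ does force $p\congruent 3\pmod 4$ --- but one still needs the multiplicative-order step to reach $p=13$ and the contradiction; the size-plus-valuation analysis you describe does not by itself kill this configuration.
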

\end{abstract}

\maketitle

\section{Introduction}

Let $G$ be a finite group,
and define an equivalence relation on the elements of $G$
by declaring that elements $a$ and $b$ of $G$ are equivalent
if, and only if, they have the same order.
The equivalence class of an element $g$ in $G$
is called its \defn{order class} and is denoted by
$\ordclass{g}{G}$.
Thus,
\begin{displaymath}
	\ordclass{g}{G} = \{ x\in G : \order{x} = \order{g} \},
\end{displaymath}
The cardinality of $\ordclass{g}{G}$ is denoted by $\ordcount{g}{G}$.
For a positive integer $k$, we define $\ordcount{k}{G}$
to be the number of elements of order $k$ in $G$.
It is equal to zero if $G$ has no elements of order $k$.
In any group $G$, the order class $\ordclass{1}{G}$ of the identity
element is always a singleton, so $\ordcount{1}{G} = 1$ however the
subscript is interpreted.

We say that $G$ has \defn{perfect order classes} if $\ordcount{g}{G}$
is a divisor of the order of $G$, for each element $g$ in $G$.
Groups with this property seem first to have been investigated
in~\cite{FinchJones2002}.
See also~\cites{FinchJones2003,JonesToppin2011}.

The symmetric group $\sym{3}$ of degree $3$ has perfect order classes
because it has $2$ elements of order $3$, and $2$ is a divisor of the
order $6$ of $\sym{3}$, and also has $3$ elements of order $2$,
and $3$ is again a divisor of $6$.
The alternating group $\alt{4}$ of degree $4$ however, does not have
perfect order classes, as it has a total of $8$ elements of order $3$, and $8$
is not a divisor of the order $12$ of $\alt{4}$.

A group is \defn{Hamiltonian} if it is non-abelian and if each of its 
subgroups is normal.

The object of this note is to describe Hamiltonian groups with
perfect order classes.
We prove the following theorem, in which $\cyclic{n}$ denotes
the cyclic group of order $n$ and $\quat$ denotes the quaternion
group of order $8$.

\begin{maintheorem}\label{thm:main}
A finite Hamiltonian group has perfect order classes if,
and only if, it has the form
$\quat\times\cyclic{3^k}$ or the form $\quat\times\cyclic{2}\times\cyclic{3^k}$,
for some positive integer $k$.
\end{maintheorem}

It follows from this that the smallest Hamiltonian group with
perfect order classes is the group $\quat\times\cyclic{3}$ of order $24$.
In particular, despite being involved in every Hamiltonian group,
the quaternion group $\quat$ of order $8$ does not itself have perfect
order classes.
(It has a total of $6$ elements of order $4$, and $6$ does not divide $8$.)

\section{Preliminaries}

All groups are supposed to be finite, and will be written multiplicatively.
We use $1$ to denote the trivial group of order $1$,
as well as to denote the identity element of any group.

We denote the quaternion group of order $8$ by $\quat$ and,
for a positive integer $n$, the cyclic group of order $n$
is denoted by $\cyclic{n}$.
If $G$ is any group, and $n$ a non-negative integer,
then $G^n$ denotes the direct product of $n$ copies of $G$
(with the understanding that $G^0$ is the trivial group).

For a positive integer $n$, Euler's function $\eulerphi(n)$ counts the
number of positive integers less than and relatively prime to $n$.
Recall that it is a multiplicative function:
$\eulerphi(ab) = \eulerphi(a)\eulerphi(b)$,
for relatively prime positive integers $a$ and $b$.
For a prime number $p$ and a positive integer $n$, we have
\begin{displaymath}
\eulerphi(p^n) = p^{n-1}(p-1),
\end{displaymath}
a formula that we use frequently.
Especially, we have $\eulerphi(p) = p - 1$.

The next result is a key result in the study of groups with perfect order classes.
It follows from the observation that each order class $\ordclass{g}{G}$
in a finite group is a disjoint union of equivalence classes
for the equivalence relation that identifies two group elements
if they generate the same cyclic subgroup,
and each equivalence class of this finer relation has cardinality
equal to $\eulerphi(\order{g})$.

\begin{proposition}\cite{Das2009b}*{Proposition 2.2}
If $g$ is an element of a finite group $G$,
then $\ordcount{g}{G}$ is divisible by $\eulerphi(\order{g})$.
\end{proposition}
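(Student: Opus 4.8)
The plan is to reduce the statement to the elementary fact that a cyclic group of order $n$ has exactly $\eulerphi(n)$ generators. Write $n = \order{g}$ and let $X = \ordclass{g}{G}$ be the set of elements of $G$ of order $n$, so that $\ordcount{g}{G} = \card{X}$. The first step is to record that for any $a \in X$ the subgroup $\langle a\rangle$ is cyclic of order $n$, that all $\eulerphi(n)$ generators of $\langle a\rangle$ again lie in $X$, and that conversely every element of $X$ contained in $\langle a\rangle$ is a generator of $\langle a\rangle$ --- since an element of a cyclic group of order $n$ has order $n$ precisely when it generates that group.

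Next I would introduce the relation on $X$ given by $a \eqv b$ if and only if $\langle a\rangle = \langle b\rangle$. This is plainly an equivalence relation, and by the previous paragraph its classes are exactly the sets of generators of the various cyclic subgroups of $G$ of order $n$; each such class therefore has cardinality $\eulerphi(n)$. Summing over the classes gives $\card{X} = \eulerphi(n)\cdot m$, where $m$ is the number of cyclic subgroups of $G$ of order $n$, which is the assertion.

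An equivalent and perhaps cleaner packaging, which I would mention, is to let the group $(\mathbb{Z}/n\mathbb{Z})^{\times}$ act on $X$ by $k\cdot a = a^{k}$. This is well defined because $\order{a^{k}} = n/\gcd(n,k) = n$ whenever $\gcd(k,n) = 1$; it is a genuine action since $(a^{k})^{j} = a^{kj}$ and $k = 1$ acts trivially; and it is free, because $a^{k} = a$ forces $a^{k-1} = 1$, hence $n \mid k-1$. A free action of a group of order $\eulerphi(n)$ partitions $X$ into orbits of size $\eulerphi(n)$, so $\eulerphi(n)$ divides $\card{X} = \ordcount{g}{G}$.

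I do not expect a genuine obstacle here: the only points needing the slightest care are that exponentiation by an integer coprime to $n$ preserves the order of an element of order $n$, and that the resulting action is free; both are immediate. The whole content of the proposition is contained in the count of generators of a finite cyclic group.
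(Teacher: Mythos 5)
Your proof is correct. Note that the paper does not actually prove this proposition---it is quoted from Das's paper with a citation---so there is no in-text argument to compare against; but your argument (in either packaging: partitioning the elements of order $n$ into generator-sets of cyclic subgroups of order $n$, or equivalently the free action of $(\mathbb{Z}/n\mathbb{Z})^{\times}$ by exponentiation) is the standard and complete proof of this fact, and every step you flag as needing care is indeed handled correctly.
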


\begin{corollary}\cite{FinchJones2002}*{Proposition 1, Corollary 1}
If $G$ is a finite group with perfect order classes then,
for each prime divisor $p$ of $G$, the order of $G$ is divisible
by $p - 1$.
In particular, every non-trivial finite group with perfect order classes has even order.
\end{corollary}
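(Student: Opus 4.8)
The plan is to deduce this directly from the preceding Proposition together with Cauchy's theorem. First I would fix a prime divisor $p$ of $\order{G}$ and invoke Cauchy's theorem to produce an element $g\in G$ with $\order{g}=p$. Since $\ordclass{g}{G}$ then contains every element of order $p$, the quantity $\ordcount{g}{G}$ is positive, and the Proposition gives that $\eulerphi(p)=p-1$ divides $\ordcount{g}{G}$.

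Next I would bring in the hypothesis that $G$ has perfect order classes: this says precisely that $\ordcount{g}{G}$ divides $\order{G}$. Combining the two divisibilities by transitivity, $p-1$ divides $\order{G}$, which is the first assertion.

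For the final clause, suppose $G$ is non-trivial and let $p$ be any prime divisor of $\order{G}$. If $p=2$ then $\order{G}$ is already even. Otherwise $p$ is odd, so $p-1$ is a positive even integer, and since $p-1$ divides $\order{G}$ by what was just shown, $2$ divides $\order{G}$ as well. In either case $\order{G}$ is even.

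I do not expect any genuine obstacle here: the argument is a two-line consequence of the Proposition, and the only external ingredient is the existence of an element of order $p$ for each prime $p\mid\order{G}$, supplied by Cauchy's theorem. The one point worth stating carefully is that the Proposition is being applied to an element of prime order, so that $\eulerphi(\order{g})$ is exactly $p-1$ rather than merely a divisor of $\ordcount{g}{G}$ with no arithmetic content.
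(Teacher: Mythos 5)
Your argument is correct and is exactly the intended derivation: the paper states this as a corollary of the preceding Proposition without writing out a proof, and the route you take (Cauchy's theorem to get an element of order $p$, then $\eulerphi(p)=p-1$ dividing $\ordcount{g}{G}$ dividing $\order{G}$) is the standard one from the cited source. The final clause is also handled correctly by splitting on whether the prime divisor is $2$ or odd.
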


The structure of Hamiltonian groups is known.
\begin{theorem}\cite{Robinson}*{5.3.7}\label{thm:hamstruct}
A finite Hamiltonian group $G$ is a direct product of $\quat$,
an elementary abelian $2$-group, and an abelian
group of odd order.
\end{theorem}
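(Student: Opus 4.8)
The plan is to prove the classical theorem of Dedekind and Baer: a finite Hamiltonian group — that is, a non-abelian group in which every subgroup is normal — has the stated shape.

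First I would reduce to a prime-power problem. Since every subgroup of $G$ is normal, every subgroup is subnormal, so $G$ is nilpotent and hence the internal direct product of its Sylow subgroups $P_1,\dots,P_r$. Each $P_i$ again has all of its subgroups normal (a subgroup of $P_i$ is a subgroup of $G$, so normal in $G$, so normal in $P_i$); if every $P_i$ were abelian then $G$ would be abelian, so at least one $P_i$ is non-abelian with all subgroups normal. Next I would record that $G$ has nilpotency class at most $2$: for each $g\in G$, normality of $\langle g\rangle$ makes $G$ act on the cyclic group $\langle g\rangle$, giving a homomorphism $G\to\operatorname{Aut}(\langle g\rangle)$ with kernel $C_G(g)$; as $\operatorname{Aut}(\langle g\rangle)$ is abelian, $G'\le C_G(g)$, and since $g$ is arbitrary, $G'\le Z(G)$. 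Consequently commutators are bilinear, $(xy)^n=x^ny^n[y,x]^{\binom n2}$, and $g^x=g[g,x]$ with $[g,x]\in\langle g\rangle\cap G'$ for all $g,x$.

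The heart of the matter — and the step I expect to be the main obstacle — is to classify the non-abelian $p$-groups $P$ in which every subgroup is normal. Choose $a,b$ with $c:=[a,b]\ne1$; by the previous paragraph $c$ is central and lies in $\langle a\rangle\cap\langle b\rangle$, and replacing $a$ by a power we may take $c$ of order $p$. The delicate part is to exploit the normality of the cyclic subgroups $\langle a^{k}b\rangle$ together with the class-$2$ identities: a careful bookkeeping of the resulting conjugation relations among the elements $a^{k}b$ forces $a$ and $b$ to have order $p^{2}$ with $a^{p}=b^{p}=c$ (after adjusting $b$), and, crucially, forces $p=2$ — for odd $p$ the resulting congruences are inconsistent. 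Hence $\langle a,b\rangle\cong Q$. Since $\langle a,b\rangle\triangleleft P$ and any automorphism of $Q$ induced by conjugation by an element of $P$ fixes each cyclic subgroup of $Q$ setwise and is therefore inner, one gets $P=\langle a,b\rangle\cdot C_P(\langle a,b\rangle)$; and $C_P(\langle a,b\rangle)$ must be elementary abelian, for an element of order $4$ in it would combine with a suitable element of $Q$ to generate a subgroup dihedral of order $8$ (or a copy of $Q_{16}$), neither of which has all subgroups normal. Writing $C_P(\langle a,b\rangle)=\langle c\rangle\times E$ with $E$ elementary abelian and noting $\langle c\rangle\le Q$ gives $P\cong Q\times E$; in particular $p=2$.

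Assembling the pieces: the above shows that a non-abelian $p$-group with all subgroups normal forces $p=2$ and has the form $Q\times E$; hence in the Sylow decomposition of $G$ every odd-order factor is abelian, and the unique even-order factor $P_2$ is of the form $Q\times E$ with $E$ elementary abelian. Letting $A$ be the product of the odd-order Sylow subgroups (an abelian group of odd order), we conclude $G\cong P_2\times A\cong Q\times E\times A$, as claimed. (Conversely — although this is not part of the statement — one checks directly that any such product is Hamiltonian, since conjugation by an element of $Q$ either centralizes a given cyclic subgroup or inverts its $Q$-component, so all subgroups of $Q\times E\times A$ are normal.)
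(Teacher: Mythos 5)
You are supplying a proof for a result the paper only quotes (Robinson 5.3.7, the Dedekind--Baer structure theorem), so there is no in-paper argument to compare against; your outline is the standard textbook route. The reductions are fine: all subgroups normal forces all Sylow subgroups normal, so $G$ is the direct product of its Sylow subgroups, each inheriting the property; the homomorphism $G\to\operatorname{Aut}(\langle g\rangle)$ correctly gives $G'\le Z(G)$ and $[g,x]\in\langle g\rangle$; and the final assembly from the $p$-group case is sound. But the entire content of the theorem sits in the step you explicitly defer: the assertion that ``a careful bookkeeping of the resulting conjugation relations among the elements $a^k b$ forces $a$ and $b$ to have order $p^2$ with $a^p=b^p=c$, and forces $p=2$.'' As written this assumes exactly what has to be proved. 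You must actually carry out the computation: with $c=[a,b]$ central of order $p$ and $c\in\langle a\rangle\cap\langle b\rangle$, use the class-$2$ identity $(a^kb)^n=a^{kn}b^n[b,a]^{k\binom{n}{2}}$ and the normality of $\langle b\rangle$, $\langle ab\rangle$, etc., to pin down the orders of $a$ and $b$ and to derive the arithmetic contradiction when $p$ is odd. Without that, the lemma ``a non-abelian $p$-group with all subgroups normal satisfies $p=2$ and contains $Q$'' is simply restated, not established.

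There is also a genuine error in a smaller step. To show $C_P(\langle a,b\rangle)$ has exponent $2$ you claim an element $x$ of order $4$ centralizing $Q$ would ``combine with a suitable element of $Q$ to generate a subgroup dihedral of order $8$ (or a copy of $Q_{16}$).'' It cannot: any subgroup generated by $x$ and elements it centralizes together with $x$ central over them is not of that shape, and indeed $\langle x, i\rangle$ is abelian. The correct argument exhibits a non-normal cyclic subgroup directly: in $Q\times\langle x\rangle$ (or in the central product where $x^2=i^2$), the subgroup $\langle ix\rangle$ is not normalized by $j$, since $j^{-1}(ix)j=i^{-1}x$, which one checks does not lie in $\langle ix\rangle$ unless $x^2=1$. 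With the central $p$-group computation actually performed and this sub-argument repaired, your proof becomes the standard one that Robinson gives.
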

Either or both of the elementary abelian direct factor and the
direct factor of odd order may be trivial, of course, as $\quat$
is itself Hamiltonian.

We make some simple remarks about order classes and their lengths.
\begin{lemma}\label{lem:hallorderclosed}
A normal Hall subgroup of a finite group contains the complete
order class of each of its members.
\end{lemma}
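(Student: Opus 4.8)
The plan is to exploit the defining coprimality property of a Hall subgroup. Let $N$ be a normal Hall subgroup of the finite group $G$, so that $\order{N}$ and the index $[G:N]$ are relatively prime, and let $g\in N$. First I would record that $\order{g}$ divides $\order{N}$, by Lagrange's theorem applied in $N$. Now take an arbitrary element $x\in G$ with $\order{x}=\order{g}$; the task is to show that $x$ already lies in $N$, which gives $\ordclass{g}{G}\subseteq N$ and hence the claim.

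Consider the canonical projection $\pi\colon G\to G/N$, which is a homomorphism precisely because $N$ is normal. The order of $\pi(x)$ divides $\order{x}=\order{g}$, and therefore divides $\order{N}$; on the other hand, $\order{\pi(x)}$ divides $\order{G/N}=[G:N]$. Since $\gcd(\order{N},[G:N])=1$, these two divisibilities force $\order{\pi(x)}=1$, that is, $x\in\ker\pi=N$. As $x$ was an arbitrary element of $G$ of order $\order{g}$, we conclude $\ordclass{g}{G}\subseteq N$, as desired.

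I do not expect a genuine obstacle here: the statement is essentially immediate once one unwinds the definition of a Hall subgroup, with normality serving only to make $G/N$ a group and to make reduction modulo $N$ a homomorphism. The two small points to be careful about are the direction of divisibility — one uses that $\order{\pi(x)}$ divides $\order{x}$, not the converse — and the fact that one really needs \emph{both} that $\order{g}$ divides $\order{N}$ and that $\order{\pi(x)}$ divides $[G:N]$ in order to invoke the coprimality and conclude $\pi(x)=1$.
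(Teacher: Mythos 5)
Your proof is correct and takes essentially the same approach as the paper: both arguments pass to the quotient $G/N$ and use the coprimality of $\order{N}$ and $[G:N]$ to force the image of $x$ to be trivial. The only cosmetic difference is that the paper reaches this conclusion via a B\'ezout identity $\alpha\order{N}+\beta[G:N]=1$, whereas you use directly that $\order{\pi(x)}$ divides both $\order{x}$ and $[G:N]$; the underlying mechanism is identical.
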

\begin{proof}
Let $H$ be a normal Hall subgroup of a finite group $G$,
and let $h$ be a member of $H$.
Since the order $\order{H}$ and the index $[G:H]$ of $H$
are coprime, there are integers $\alpha$ and $\beta$ for
which $\alpha\order{H} + \beta [G:H] = 1$.

If $g$ is any member of $\ordclass{h}{G}$,
then $\order{g} = \order{h}$ is a divisor of $\order{H}$, so we have
$g = g^{\alpha\order{H}}g^{\beta [G:H]} = g^{\beta [G:H]}$.
Then, in the quotient group $G/H$ of order $[G:H]$ we
compute $g^{\beta [G:H]}H = (gH)^{\beta [G:H]} = H$,
whence $g$ belongs to $H$.
Since $g$ was an arbitrary member of $\ordclass{h}{G}$,
we conclude that $\ordclass{h}{G}\subset H$, as claimed.
\end{proof}

The following lemma is proved in essentially the same way.
\begin{lemma}\label{lem:hallsuck}
If $H$ is a normal Hall subgroup of a finite group $G$ then,
for each divisor $d$ of the order of $H$,
we have $\ordcount{d}{G} = \ordcount{d}{H}$.
\end{lemma}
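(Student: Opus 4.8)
The plan is to prove the slightly stronger statement that every element of $G$ whose order divides $\order{H}$ already lies in $H$; the asserted equality of counting functions is then immediate.

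First I would take an element $g\in G$ with $\order{g}=d$ a divisor of $\order{H}$ and pass to the quotient $G/H$, which has order $[G:H]$. The order of the image $gH$ in $G/H$ divides $\order{g}=d$, hence divides $\order{H}$, and it also divides $[G:H]$. Because $H$ is a Hall subgroup, $\gcd(\order{H},[G:H])=1$, so the order of $gH$ must be $1$; that is, $g\in H$. (Alternatively, one can repeat the B\'ezout argument from the proof of Lemma~\ref{lem:hallorderclosed} verbatim: writing $\alpha\order{H}+\beta[G:H]=1$ gives $g=g^{\beta[G:H]}$, and this element maps to the identity of $G/H$.)

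Next, I would fix a divisor $d$ of $\order{H}$ and compare the set of elements of order $d$ in $G$ with the set of elements of order $d$ in $H$. By the previous paragraph the former is contained in $H$, and clearly every element of order $d$ in $H$ has order $d$ as an element of $G$; hence the two sets coincide, and therefore $\ordcount{d}{G}=\ordcount{d}{H}$. Phrasing the conclusion at the level of these sets rather than manipulating the numbers directly has the advantage that it automatically covers the case in which $H$ — and hence $G$ — has no element of order $d$ at all, when both counts equal zero.

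I do not anticipate any real obstacle: the argument is a direct variation of the one used for Lemma~\ref{lem:hallorderclosed}, the only new point being the elementary observation that an element of $G$ whose order is coprime to the index $[G:H]$ must become trivial in the quotient $G/H$. The one thing to be mildly careful about is the bookkeeping around the case $\ordcount{d}{H}=0$, which is why I would argue about the sets of elements of order $d$ rather than about the integers $\ordcount{d}{G}$ and $\ordcount{d}{H}$ themselves.
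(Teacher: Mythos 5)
Your proposal is correct and follows essentially the same route as the paper, which explicitly notes that this lemma ``is proved in essentially the same way'' as Lemma~\ref{lem:hallorderclosed}: show that any element of $G$ whose order divides $\order{H}$ already lies in $H$ (via the coprimality of $\order{H}$ and $[G:H]$, whether phrased through the order of $gH$ in $G/H$ or through the B\'ezout identity), so the elements of order $d$ in $G$ and in $H$ form the same set. Your remark about handling the case $\ordcount{d}{H}=0$ by arguing with sets rather than integers is a sensible, if minor, point of care.
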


\begin{corollary}\label{cor:coprimedp}
Let $G = A\times B$, where $A$ and $B$ have relatively prime orders.
If $a$ divides $\order{A}$ and $b$ divides $\order{B}$, then
$\ordcount{ab}{G} = \ordcount{a}{A}\ordcount{b}{B}$.
\end{corollary}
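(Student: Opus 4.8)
The plan is to argue directly from the way element orders behave in a direct product of groups of coprime order. Every element of $G = A\times B$ is a pair $(x,y)$ with $x\in A$ and $y\in B$, and the order of $(x,y)$ equals $\lcm(\order{x},\order{y})$. Since $\order{x}$ divides $\order{A}$ and $\order{y}$ divides $\order{B}$, and these two group orders are coprime, the integers $\order{x}$ and $\order{y}$ are themselves coprime, so in fact $\order{(x,y)} = \order{x}\,\order{y}$.

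First I would show that $(x,y)$ has order $ab$ precisely when $\order{x} = a$ and $\order{y} = b$. One direction is immediate. For the other, suppose $\order{x}\,\order{y} = ab$. Because $\order{x}$ divides $\order{A}$ and $b$ divides $\order{B}$, the integers $\order{x}$ and $b$ are coprime; as $\order{x}$ divides $ab$, it follows that $\order{x}$ divides $a$. By the symmetric argument $\order{y}$ divides $b$. Then $\order{x}\,\order{y}$ divides $ab$ while also equalling it, and together with $\order{x}\mid a$ and $\order{y}\mid b$ this forces $\order{x} = a$ and $\order{y} = b$.

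Consequently the set of elements of $G$ of order $ab$ is exactly $\{(x,y) : \order{x} = a,\ \order{y} = b\}$, which visibly has $\ordcount{a}{A}\cdot\ordcount{b}{B}$ members; this also takes care of the degenerate possibilities, since if $A$ has no element of order $a$ (which can happen even when $a$ divides $\order{A}$) then there are no such pairs and both sides vanish. Hence $\ordcount{ab}{G} = \ordcount{a}{A}\,\ordcount{b}{B}$.

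I do not anticipate a genuine obstacle here. The only point needing a moment's care is the implication in the second step, namely that the single equation $\order{x}\,\order{y} = ab$, in the presence of the coprimality hypotheses, determines $\order{x}$ and $\order{y}$ separately. It is worth remarking that the result is in the same spirit as Lemmas~\ref{lem:hallorderclosed} and~\ref{lem:hallsuck} — here $A$ and $B$ are both normal Hall subgroups of $G$ — but the direct computation above seems the most transparent route.
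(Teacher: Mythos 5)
Your proof is correct. The paper itself supplies no argument for this statement; it is presented bare, as a consequence of the preceding Hall-subgroup lemmas (in $G=A\times B$ with $\gcd(\order{A},\order{B})=1$, both $A$ and $B$ are normal Hall subgroups). But note that Lemma~\ref{lem:hallsuck} by itself does not immediately yield the corollary, since $ab$ need not divide $\order{A}$ or $\order{B}$; one still needs exactly the ingredient you supply, namely that an element of order $ab$ decomposes uniquely into commuting components of orders $a$ and $b$ lying in $A$ and $B$ respectively. Your direct computation --- $\order{(x,y)}=\lcm(\order{x},\order{y})=\order{x}\,\order{y}$ by coprimality, followed by the observation that $\order{x}\,\order{y}=ab$ together with $\gcd(\order{x},b)=\gcd(\order{y},a)=1$ forces $\order{x}=a$ and $\order{y}=b$ --- is the standard route and fills in precisely what the paper leaves implicit. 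The remark about the degenerate case (both sides vanishing when $A$ has no element of order $a$) is a nice touch, since the hypothesis only assumes $a$ divides $\order{A}$, not that such an element exists. The one stylistic difference is that your argument is entirely self-contained and elementary, whereas the paper's framing suggests deducing it from the Hall-subgroup machinery of Lemmas~\ref{lem:hallorderclosed} and~\ref{lem:hallsuck}; for the way the corollary is actually used later (in Lemma~\ref{lem:ordcount} and the counting arguments of Lemmas~\ref{lem:step1} and~\ref{lem:step2}), your version is entirely adequate.
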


The following well-known result on consecutive prime powers
is a consequence of the Bang-Zsigmondy theorem.
\begin{lemma}\label{lem:conspp}
Let $p$ and $q$ be prime numbers and let $m$ and $n$ be
positive integers such that $p^m - 1 = q^n$.
Then we have one of the following:
\begin{enumerate}
\item{$p = n = 3$ and $q = m = 2$;}
\item{$(n,p) = (1,2)$, and $q = 2^m -1$ is a Mersenne prime (whence $m$ is prime); or,}
\item{$(m,q) = (1,2)$, and $p = 2^n + 1$ is a Fermat prime (whence $n$ is a power of $2$).}
\end{enumerate}
\end{lemma}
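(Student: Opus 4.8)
The plan is to solve the equation $p^m - 1 = q^n$ outright by a short case analysis, dividing first according to whether $q$ is even or odd and then according to the parities of the exponents $m$ and $n$. Apart from one sub-case the reasoning is entirely elementary; that sub-case, flagged below, is the one I expect to need a genuine idea, and is also the point at which the Bang--Zsigmondy circle of results is relevant.

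Suppose first that $q$ is odd. Then $q^n$ is odd, so $p^m - 1$ is odd, so $p$ is even and hence $p = 2$; thus $2^m - 1 = q^n$, and since $q \geq 3$ we have $m \geq 2$. If $n = 1$ then $q = 2^m - 1$ is a Mersenne prime, and the familiar fact that $2^a - 1$ divides $2^m - 1$ whenever $a \mid m$ forces $m$ to be prime --- this is alternative~(2).

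The remaining case in this branch, $q$ odd and $n \geq 2$, is the one place where I anticipate real work: here $p - 1 = 1$ contributes no prime divisor, so the ``extra prime divisor'' kind of argument is unavailable. To rule it out I would argue by congruences. From $q^n = 2^m - 1 \equiv 3 \pmod 4$ (using $m \geq 2$) together with $q$ odd one gets $q \equiv 3 \pmod 4$ and $n$ odd, hence $n \geq 3$; then $2^m = q^n + 1 = (q + 1)\bigl(q^{n-1} - q^{n-2} + \cdots - q + 1\bigr)$, and the second factor --- a sum of $n$ odd summands with $n$ odd --- is itself odd, while it plainly exceeds $1$, so it cannot divide $2^m$, a contradiction. (This is essentially the assertion that $2^m - 1$ is never a perfect power when $m \geq 2$, which also follows from Bang--Zsigmondy.)

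Now suppose $q = 2$, so that $p^m - 1 = 2^n$ is even and $p$ is odd. If $m = 1$ then $p = 2^n + 1$ is a Fermat prime; and if $n$ had an odd divisor $d > 1$ then $2^{n/d} + 1$ would be a proper nontrivial divisor of $p$, so $n$ must be a power of $2$ --- this is alternative~(3). If instead $m \geq 2$ and some odd prime $\ell$ divides $m$, then $(p^m - 1)/(p^{m/\ell} - 1) = 1 + p^{m/\ell} + \cdots + p^{(\ell - 1)m/\ell}$ is a sum of $\ell$ odd summands ($\ell$ being odd), hence an odd integer greater than $1$, contradicting that it divides $2^n$; so $m$ is a power of $2$, and then $p^m - 1 = (p^{m/2} - 1)(p^{m/2} + 1)$ exhibits two powers of $2$ differing by $2$, forcing them to be $2$ and $4$. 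Hence $p^{m/2} = 3$, so $p = 3$, $m = 2$ and $n = 3$ --- this is alternative~(1). (For $m \geq 3$ one could instead quote Bang--Zsigmondy: $p^m - 1$ then has a primitive prime divisor, distinct from every prime divisor of $p - 1$, so $p^m - 1$ has at least two prime divisors and is not a prime power.) Assembling these cases yields precisely the three alternatives in the statement.
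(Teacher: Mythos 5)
Your proof is correct, and it is genuinely self-contained, which the paper's treatment is not: the paper states this lemma without proof, simply remarking that it is ``a consequence of the Bang--Zsigmondy theorem.'' Your case analysis replaces that citation with elementary arguments. Each step checks out: in the branch $q$ odd, $n\geq 2$ you correctly deduce $q\equiv 3\pmod 4$ and $n$ odd from $2^m-1\equiv 3\pmod 4$, and the factor $q^{n-1}-q^{n-2}+\cdots-q+1$ is indeed an odd integer exceeding $1$ (it is a sum of $n$ odd terms with $n$ odd, and is at least $q^{n-2}(q-1)>1$), so it cannot divide $2^m$; in the branch $q=2$, $m\geq 2$ the cyclotomic-style quotient $1+p^{m/\ell}+\cdots+p^{(\ell-1)m/\ell}$ is likewise an odd divisor of $2^n$ exceeding $1$, and the final reduction to two powers of $2$ differing by $2$ pins down $(p,m,n)=(3,2,3)$. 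What the paper's approach buys is brevity and a pointer to a general tool that also handles related divisibility questions; what yours buys is a complete proof from first principles, at the cost of about a page of case analysis. One small presentational point: your parenthetical that the $n\geq 2$ sub-case ``is essentially the assertion that $2^m-1$ is never a perfect power for $m\geq 2$'' is accurate, but your congruence argument already disposes of it, so the appeal to Bang--Zsigmondy there is redundant rather than load-bearing.
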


\section{Hamiltonian Groups with Perfect Order Classes}

The following useful lemma provides formulae for the numbers
of elements of a given order in a finite Hamiltonian group.
We express these in terms of a direct decomposition of the
form $G = \quat\times E\times A$, as described in Theorem~\ref{thm:hamstruct}.

\begin{lemma}\cite{Tarnauceanu2013}*{Theorem 2.1}\label{lem:ordcount}
Let $G = \quat\times E\times A$ be a finite Hamiltonian group,
where $E\iso\cyclic{2}^e$, for $e$ a non-negative integer, and $A$ is an abelian group of odd order.
For each odd divisor $d$ of the order of $G$, we have
\begin{enumerate}
\item{$\ordcount{d}{G} = \ordcount{d}{A}$;}
\item{$\ordcount{2d}{G} = (2^{e+1} - 1)\cdot \ordcount{d}{A}$; and,}
\item{$\ordcount{4d}{G} = 3\cdot 2^{e+1}\cdot \ordcount{d}{A}$.}
\end{enumerate}
\end{lemma}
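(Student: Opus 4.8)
The plan is to reduce everything to a count inside the $2$-group $Q\times E$ and then combine that count with the odd part $A$ by means of Corollary~\ref{cor:coprimedp}. Writing $G = (Q\times E)\times A$, the factors $Q\times E$ and $A$ have relatively prime orders, so the corollary gives $\ordcount{md}{G} = \ordcount{m}{Q\times E}\cdot\ordcount{d}{A}$ whenever $m\divides\order{Q\times E}$ and $d\divides\order{A}$. Since $\order{G} = 2^{e+3}\order{A}$ and $\order{A}$ is odd, $\order{A}$ is exactly the odd part of $\order{G}$, so every odd divisor $d$ of $\order{G}$ divides $\order{A}$; moreover every element of $Q\times E$ has order dividing $4$, so the only orders of elements occurring in $Q\times E$ are $1$, $2$, and $4$. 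Thus it remains only to evaluate $\ordcount{1}{Q\times E}$, $\ordcount{2}{Q\times E}$ and $\ordcount{4}{Q\times E}$, take $m=1,2,4$ respectively, and note that $\ordcount{1}{A}=1$ so that the case $m=1$ reads $\ordcount{d}{G}=\ordcount{d}{A}$.

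I would then carry out the count in $Q\times E$ using the fact that an element $(x,y)$ has order $\lcm(\order{x},\order{y})$. The factor $E\iso\cyclic{2}^e$ has a unique element of order $1$ and $2^e-1$ elements of order $2$; the quaternion group $Q$ has a unique element of order $1$, exactly one element of order $2$ (namely $-1$), and $6$ elements of order $4$. Hence $\ordcount{1}{Q\times E}=1$. An element has order $2$ precisely when $\order{x},\order{y}\leq 2$ and not both are $1$, which accounts for $1\cdot(2^e-1)$ pairs with $\order{x}=1$ together with $1\cdot 2^e$ pairs with $\order{x}=2$, a total of $2^{e+1}-1$. An element has order $4$ precisely when $\order{x}=4$ with $\order{y}$ arbitrary in $\{1,2\}$, which accounts for $6\cdot 2^e = 3\cdot 2^{e+1}$ pairs. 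Substituting these three values into the product formula above yields (1), (2) and (3).

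This argument is essentially bookkeeping, so I do not expect a genuine obstacle; the only points requiring care are the identification of the odd divisors of $\order{G}$ with the divisors of $\order{A}$ and an accurate tally of the order statistics of $Q$. Alternatively one could bypass Corollary~\ref{cor:coprimedp} and instead apply Lemma~\ref{lem:hallsuck} to the normal Hall subgroup $A$ together with the complementary count in the (normal Sylow) $2$-subgroup $Q\times E$, but routing through the corollary is the cleanest path.
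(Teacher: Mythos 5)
Your proposal is correct and follows essentially the same route as the paper: both reduce to counting elements of orders $1$, $2$ and $4$ in the $2$-part $Q\times E$ and then combine with the odd part $A$ via Corollary~\ref{cor:coprimedp}. If anything, your version is slightly more systematic, since you also obtain part~(1) uniformly from the corollary with $m=1$, whereas the paper treats the small cases by direct inspection of the elements $g=qua$.
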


Note especially the cases in which $d = 1$:
\begin{displaymath}
\ordcount{2}{G} = 2^{e+1} - 1
\end{displaymath}
and
\begin{displaymath}
\ordcount{4}{G} = 3\cdot 2^{e+1}.
\end{displaymath}

Since a finite Hamiltonian group $G$ has elements of order $4$,
if it is to have perfect order classes, then $\ordcount{4}{G}$,
which is divisible by $3$,
must be a divisor of the order of $G$.

\begin{corollary}
The order of a finite Hamiltonian group with perfect order classes
is a multiple of $3$.
\end{corollary}

We now turn to the proof of the main theorem.
Let us begin by showing that each type of Hamiltonian group
described there has perfect order classes.

\begin{lemma}\label{lem:sufficiency}
If $k$ is a positive integer, then the groups
$G = \quat\times\cyclic{2}\times\cyclic{3^k}$,
and
$G = \quat\times\cyclic{3^k}$,
have perfect order classes.
\end{lemma}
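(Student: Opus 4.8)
The plan is to verify the divisibility condition directly using the explicit order counts from Lemma~\ref{lem:ordcount}. Fix a positive integer $k$ and write $G = Q\times E\times\cyclic{3^k}$, where $E$ is either trivial (so $e = 0$) or $E\iso\cyclic{2}$ (so $e = 1$). Then $\order{G} = 2^{e+3}\cdot 3^k$, and the possible orders of elements of $G$ are exactly the numbers of the form $d$, $2d$, or $4d$, where $d$ ranges over the divisors $1, 3, 3^2,\dots,3^k$ of $3^k$. So there are only three families of order-class lengths to check, and in each case the relevant count is a small constant times $\ordcount{d}{\cyclic{3^k}} = \eulerphi(d)$.

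First I would record that for $d = 3^j$ with $1\le j\le k$ we have $\ordcount{d}{\cyclic{3^k}} = \eulerphi(3^j) = 2\cdot 3^{j-1}$, while $\ordcount{1}{\cyclic{3^k}} = 1$. Then I would run through the three cases of Lemma~\ref{lem:ordcount}. For elements of odd order $d$, part~(1) gives $\ordcount{d}{G} = \eulerphi(d)$, which divides $\eulerphi(3^k)\le 3^k$ and hence divides $\order{G}$. For elements of order $2d$, part~(2) gives $\ordcount{2d}{G} = (2^{e+1}-1)\,\eulerphi(d)$; here $2^{e+1}-1$ is $1$ (if $e=0$) or $3$ (if $e=1$), and in either case this divides $2^{e+3}\cdot 3^k$ since $\eulerphi(d)$ contributes at most $2\cdot 3^{k-1}$ and the factor $2$ and $3$ are amply available. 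For elements of order $4d$, part~(3) gives $\ordcount{4d}{G} = 3\cdot 2^{e+1}\,\eulerphi(d)$, and again $3\cdot 2^{e+1}\cdot 2\cdot 3^{j-1} = 2^{e+2}\cdot 3^{j}$ divides $2^{e+3}\cdot 3^k$ for all $1\le j\le k$, while for $d=1$ the count $3\cdot 2^{e+1}$ divides $2^{e+3}\cdot 3^k$ as well.

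I would also make explicit the two special cases $d=1$, $2$, $4$, since the identity has a singleton order class ($\ordcount{1}{G}=1$, trivially a divisor) and the counts $\ordcount{2}{G} = 2^{e+1}-1$ and $\ordcount{4}{G} = 3\cdot 2^{e+1}$ are covered by the same arithmetic. The only real content is keeping track of which powers of $2$ and $3$ appear on each side of the divisibility, and the point is that $\order{G}$ carries a full $3^k$ together with $2^{e+3}$, which always dominates the at-most-$2\cdot 3^{k-1}$ from $\eulerphi$ and the at-most-$3\cdot 2^{e+1}$ from the quaternionic and elementary-abelian part. There is no genuine obstacle here; the lemma is a routine bookkeeping check, and the main thing to be careful about is handling the cases $e = 0$ and $e = 1$ uniformly and not forgetting the degenerate divisor $d = 1$.
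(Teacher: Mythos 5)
Your proposal is correct and follows essentially the same route as the paper: both verify the lemma by direct computation of the order-class sizes via Lemma~\ref{lem:ordcount} and check each against $\order{G}=2^{e+3}\cdot 3^k$, the only cosmetic difference being that you treat $e=0$ and $e=1$ uniformly where the paper does the cyclic case first and then doubles the relevant counts. (One small wording slip: $\eulerphi(3^j)$ divides $\order{G}$ because $2\cdot 3^{j-1}$ divides $2^{e+3}3^k$, not because it is at most $3^k$; your computation already shows this, so nothing is missing.)
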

\begin{proof}
Consider first the case $G = \quat\times\cyclic{3^k}$,
and note that
\begin{displaymath}
\order{G} = \order{\quat\times\cyclic{3^k}} = 8\cdot 3^k.
\end{displaymath}
Then $G$ has an unique element of order $2$,
and $6$ elements of order $4$,
so both $\ordcount{2}{G}$ and $\ordcount{4}{G}$ divide $\order{G}$.
For each $j$ with $1\leq j\leq k$,
we have
\begin{displaymath}
\ordcount{3^j}{G} = \ordcount{3^j}{\cyclic{3^k}} = \eulerphi(3^j) = 2\cdot 3^{j-1},
\end{displaymath}
which is a divisor of $\order{G}$.
Next,
\begin{displaymath}
\ordcount{2\cdot 3^j}{G} = 2\cdot \ordcount{3^j}{\cyclic{3^k}} = 2\cdot\eulerphi(3^j) = 4\cdot 3^{j-1},
\end{displaymath}
and
\begin{displaymath}
\ordcount{4\cdot 3^j}{G} = 3\cdot 2\cdot \ordcount{3^j}{\cyclic{3^k}} = 3\cdot 2\cdot\eulerphi(3^j) = 3\cdot 4\cdot 3^{j-1} = 4\cdot 3^j,
\end{displaymath}
and these divide the order of $G$.

Now consider the case $G = \quat\times\cyclic{2}\times\cyclic{3^k}$.
In this case there are $3$ elements of order $2$ and $12$ of order $4$,
while the number of elements of order a power of $3$ is the same as
in the previous case.
The numbers of elements of orders $2\cdot 3^j$ and $4\cdot 3^j$ are twice
as many as the previous case, and these remain divisors of the order of $G$.

This completes the proof.
\end{proof}

The remainder of this section is devoted to proving the converse.
To that end, we suppose that $G$ is a finite Hamiltonian group
with perfect order classes.
Then
\begin{displaymath}
G = \quat\times E\times T\times P,
\end{displaymath}
where $E\iso\cyclic{2}^e$ is an elementary abelian $2$-group of rank $e\geq 0$,
$T$ is a non-trivial abelian $3$-group,
and $P$ is an abelian group of odd order, coprime to $3$.
Note that we have
\begin{displaymath}
\order{G} = 8\cdot 2^e\cdot 3^k\cdot \order{P} = 2^{e+3}\cdot 3^k\order{P},
\end{displaymath}
for some positive integer $k$.

The argument to follow will show, in a sequence of lemmata,
first that the subgroup $P$ has prime-power order,
then that $P$ is actually trivial,
and finally that the $3$-subgroup $T$ is cyclic
and that the elementary abelian $2$-subgroup $E$ is either
trivial or of order $2$.

Let us first show that our subgroup $P$ has prime power order.

\begin{lemma}\label{lem:step1}
The order of $G$ is divisible by at most one prime number greater than $3$.
In particular, $P$ is a $p$-group, for some prime number $p > 3$.
\end{lemma}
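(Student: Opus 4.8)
The plan is to argue by contradiction: suppose $\order{G}$ is divisible by two distinct primes $p$ and $q$, both greater than $3$. Since $\order{G} = 2^{e+3}\cdot 3^{k}\cdot\order{P}$ and neither $p$ nor $q$ divides $2^{e+3}\cdot 3^{k}$, both $p$ and $q$ divide $\order{P}$. As $P$ is abelian and $p,q$ both divide its order, $P$ contains an element of order $pq$; multiplying it by an element of order $4$ in $Q$ and an element of order $3$ in the non-trivial $3$-group $T$ --- three elements that commute and have pairwise coprime orders --- yields an element of $G$ of order $\lcm(4,3,pq) = 12pq$. In particular $3pq$ is an odd divisor of $\order{G}$, and, writing $A = T\times P$ for the odd-order direct factor of $G$, we have $\ordcount{3pq}{A}\neq 0$.

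Next I would bound from below the power of $2$ dividing $\ordcount{12pq}{G}$. By Lemma~\ref{lem:ordcount}(3) applied to the odd divisor $d = 3pq$,
\[
\ordcount{12pq}{G} = 3\cdot 2^{e+1}\cdot\ordcount{3pq}{A}.
\]
By the Proposition recalled in the preliminaries, $\ordcount{3pq}{A}$ is divisible by $\eulerphi(3pq) = \eulerphi(3)\eulerphi(p)\eulerphi(q) = 2(p-1)(q-1)$; since $p-1$ and $q-1$ are both even, $2^{3}$ divides $\eulerphi(3pq)$, hence $2^{3}$ divides $\ordcount{3pq}{A}$, and therefore $2^{e+4}$ divides $\ordcount{12pq}{G}$. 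On the other hand $3^{k}\order{P}$ is odd, so the exact power of $2$ dividing $\order{G}$ is $2^{e+3}$. Consequently $\ordcount{12pq}{G}$ does not divide $\order{G}$, contradicting the hypothesis that $G$ has perfect order classes. Hence $\order{G}$ has at most one prime divisor exceeding $3$; as every prime divisor of $\order{P}$ exceeds $3$, the group $P$ is a $p$-group for a single prime $p>3$ (or is trivial, a possibility ruled out only in the next lemma).

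There is no genuine obstacle here once one hits on the element of order $12pq$; the only point needing care is the bookkeeping of powers of $2$. The mechanism is that the factor $3\cdot 2^{e+1}$ contributed by the order-$4$ part of the element (Lemma~\ref{lem:ordcount}(3)) already consumes all but two of the $e+3$ available factors of $2$ in $\order{G}$, while the odd primes $3$, $p$, $q$ each force a further factor of $2$ in the order class --- an overshoot by one. With only a single prime greater than $3$ available, the same estimate produces merely $2^{e+3}$, matching $\order{G}$ exactly and so forcing no contradiction; disposing of that case requires the separate argument of the following lemma.
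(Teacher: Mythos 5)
Your proof is correct and follows essentially the same route as the paper: both derive a contradiction by counting elements of order $12pq$ and showing the count is divisible by $2^{e+4}$, one power of $2$ more than $\order{G}$ can supply. The only difference is cosmetic --- you bundle the odd part into $\ordcount{3pq}{A}$ and invoke $\eulerphi(3pq)$ once, while the paper factors the count as $3\cdot 2^{e+1}\cdot\ordcount{3}{G}\cdot\ordcount{p}{G}\cdot\ordcount{q}{G}$ and notes each odd-prime factor is even; you are also slightly more careful in verifying that elements of order $12pq$ actually exist.
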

\begin{proof}
Suppose, to the contrary, that $G$ is divisible by two odd
primes $p$ and $q$, both greater than $3$.
Then
\begin{displaymath}
\ordcount{12pq}{G} = 3\cdot 2^{e+1} \cdot \ordcount{3}{G} \cdot \ordcount{p}{G}\cdot \ordcount{q}{G}.
\end{displaymath}
Now each of $\ordcount{3}{G}$, $\ordcount{p}{G}$ and $\ordcount{q}{G}$ is even,
so $\ordcount{12pq}{G}$ is divisible by $2^{e+4}$,
which implies that $2^{e+4}$ divides $\order{G}$,
a contradiction.
\end{proof}

From the lemma, it follows that our group $G$ has the form
\begin{displaymath}
G = \quat\times\cyclic{2}^e\times T\times P,
\end{displaymath}
where $P$ is a $p$-group, for some prime $p > 3$.
The next step is to show that $P$ is trivial.

\begin{lemma}\label{lem:step2}
Let $G = \quat\times\cyclic{2}^e\times T\times P$,
where $T$ is a non-trivial abelian $3$-group,
$e\geq 0$,
and $P$ is an abelian $p$-group, for some prime $p > 3$.
If $G$ has perfect order classes,
then $P$ is trivial.
\end{lemma}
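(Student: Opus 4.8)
The plan is to suppose $P$ is non-trivial and reach a contradiction, by examining only two order classes of $G$: that of the elements of order $12p$, and---once $T$ is shown to be cyclic---that of the elements of order $4\cdot 3^k p$. Write $\order{T} = 3^k$ (so $k\geq 1$), let $t$ be the rank of $T$ and $r$ the rank of $P$, so that $\ordcount{3}{T} = 3^t - 1$ and $\ordcount{p}{P} = p^r - 1\geq p - 1\geq 4$, and recall that $\order{G} = 2^{e+3}\cdot 3^k\cdot\order{P}$.

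Since the product of an element of order $4$ in $Q$, one of order $3$ in $T$, and one of order $p$ in $P$ has order $12p$, Lemma~\ref{lem:ordcount}(3) and Corollary~\ref{cor:coprimedp} give
\begin{displaymath}
\ordcount{12p}{G} = 3\cdot 2^{e+1}\cdot(3^t - 1)(p^r - 1),
\end{displaymath}
and, since $G$ has perfect order classes, this divides $\order{G}$. The only primes dividing $\order{G}$ are $2$, $3$ and $p$; as $3^t - 1$ is coprime to $3$ and $p^r - 1$ is coprime to $p$, it follows that $3^t - 1 = 2^a p^c$ and $p^r - 1 = 2^{a'}3^b$ for suitable exponents, and---both being even---comparing the exponent of $2$ on the two sides of the divisibility forces $a = a' = 1$. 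Hence
\begin{displaymath}
3^t - 1 = 2p^c\qquad\text{and}\qquad p^r - 1 = 2\cdot 3^b .
\end{displaymath}
Since $p - 1$ divides $p^r - 1$ and is even, $p - 1 = 2\cdot 3^{b'}$ for some $b'\geq 1$; in particular $p\congruent 3\pmod 4$, and $3$ is the only odd prime dividing $p - 1$.

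The heart of the argument is to show that $c = 0$. Suppose $c\geq 1$. Then $t$ is odd, because $2$ divides $3^t - 1$ exactly once, and $t\geq 3$; choose an odd prime divisor $\ell$ of $t$. Then $3^\ell - 1$ divides $3^t - 1 = 2p^c$, so $(3^\ell - 1)/2$ divides $p^c$, and since $(3^\ell - 1)/2\geq 13$ this yields $3^\ell - 1 = 2p^{c'}$ for some $c'\geq 1$. In particular $p$ divides $3^\ell - 1$, so the multiplicative order of $3$ modulo $p$ divides the prime $\ell$; it is not $1$, since $p$ does not divide $3 - 1 = 2$, so it equals $\ell$, and therefore $\ell$ divides $p - 1$. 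As $3$ is the only odd prime dividing $p - 1$, we conclude $\ell = 3$, and then $3^3 - 1 = 26 = 2p^{c'}$ forces $p = 13$---contradicting $p\congruent 3\pmod 4$. Hence $c = 0$, so $3^t - 1 = 2$, $t = 1$, and $T\iso\cyclic{3^k}$.

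With $T$ cyclic, $G$ has an element of order $4\cdot 3^k p$, and Lemma~\ref{lem:ordcount}(3) together with Corollary~\ref{cor:coprimedp} gives
\begin{displaymath}
\ordcount{4\cdot 3^k p}{G} = 3\cdot 2^{e+1}\cdot\eulerphi(3^k)\cdot(p^r - 1) = 2^{e+2}\cdot 3^k\cdot(p^r - 1).
\end{displaymath}
Since $G$ has perfect order classes this divides $\order{G} = 2^{e+3}\cdot 3^k\cdot\order{P}$, which forces $p^r - 1$ to divide $2\order{P}$ and hence, being coprime to $p$, to divide $2$; but $p^r - 1\geq 4$, a contradiction. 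Therefore $P$ is trivial. I expect the real obstacle to be the reduction just carried out in the previous paragraph: everything hinges on the equation $3^t - 1 = 2p^c$, and it is the passage to a prime divisor of $t$---at which point the order of $3$ modulo $p$ can be pinned down---that finishes the job, in the same vein as the analysis of near-consecutive prime powers behind Lemma~\ref{lem:conspp}.
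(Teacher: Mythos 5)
Your proof is correct, and it takes a genuinely different and noticeably more economical route than the paper's. The paper works with order classes that involve $p$ and the structure of $T$ only separately: it first forces the rank of $T$ to be odd using $\ordcount{12}{G}$, then splits into the cases of rank $1$ and rank at least $3$, the first of which needs a further sub-analysis on $e>0$ versus $e=0$ and on $p\congruent\pm 1\pmod{6}$, invoking $\ordcount{2\cdot 3^k}{G}$, $\ordcount{2\cdot p\cdot 3^k}{G}$, $\ordcount{60}{G}$ and $\ordcount{4\cdot p\cdot 3^k}{G}$ in turn. You instead extract the key constraints all at once from $\ordcount{12p}{G}=3\cdot 2^{e+1}(3^t-1)(p^r-1)$: both factors are even and the $2$-part of $\order{G}$ leaves room for only $2^2$ beyond $2^{e+1}$, so $3^t-1=2p^c$ and $p^r-1=2\cdot 3^b$ hold simultaneously, giving $p-1=2\cdot 3^{b'}$ and hence $p\congruent 3\pmod{4}$ for free. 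Your elimination of $c\geq 1$ via the multiplicative order of $3$ modulo a prime divisor $\ell$ of $t$ is the same idea the paper uses in its rank-at-least-$3$ case (both roads lead to $p=13$), but your congruence $p\congruent 3\pmod{4}$ dispatches $p=13$ on the spot, where the paper must return to $\ordcount{12}{G}$. Finally, your one-line finish --- $2^{e+2}\cdot 3^k\cdot(p^r-1)$ divides $2^{e+3}\cdot 3^k\cdot\order{P}$ forces $p^r-1$ to divide $2$ --- replaces the paper's entire cyclic-$T$ case analysis and needs no hypothesis on $e$ or on the residue of $p$ modulo $6$. All the individual steps (the rank formulas $\ordcount{3}{T}=3^t-1$ and $\ordcount{p}{P}=p^r-1$, the divisibility bookkeeping, and the order-of-$3$ argument) check out, so what you gain is a proof using only two order classes in place of the paper's half dozen.
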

\begin{proof}
Suppose that $\order{P} = p^m$, where $m\geq 1$;
we shall derive a contradiction.
Then $G$ has an element of order $p$,
so $\ordcount{p}{G}$ is divisible by $p - 1$.
Suppose that $\order{T} = 3^k$, with $k\geq 1$.
Note the formula
\begin{displaymath}
\order{G} = 2^{e+3}\cdot 3^k\cdot p^m,
\end{displaymath}
and, in particular, the highest powers of $2$ and $3$ that divide
the order of $G$.

Since $\ordcount{3}{G} = \ordcount{3}{T}$, we can write
\begin{displaymath}
\ordcount{3}{G} = 3^{\lambda} - 1,
\end{displaymath}
where $\lambda\geq 1$ is the rank of $T$.

If $\lambda$ is even, then $3^{\lambda}\congruent 1\pmod{8}$,
so $\ordcount{3}{G}$ is divisible by $8$.
Then $\ordcount{12}{G} = 3\cdot 2^{e+1}\cdot\ordcount{3}{G}$
is divisible by $2^{e+4}$.
Since $\ordcount{12}{G}$ divides $\order{G} = 2^{e+3}\cdot 3^k\cdot p^m$,
this is a contradiction.
Therefore, $\lambda$ is odd.

If $\lambda = 1$, then $T\iso\cyclic{3^k}$ is cyclic,
and so has an element of order $3^k$,
and the number of elements of order $3^k$ is equal to
$\eulerphi(3^k) = 2\cdot 3^{k-1}$.
Then from Lemma~\ref{lem:ordcount} the number of elements of $G$
whose order is equal to $4\cdot p\cdot 3^{k}$ is given by
\begin{displaymath}
\ordcount{4\cdot p\cdot 3^k}{G}
= 3\cdot 2^{e+1}\cdot\ordcount{3^k}{T}\ordcount{p}{P}
= 2^{e+2}\cdot 3^k\cdot\ordcount{p}{P}.
\end{displaymath}
Since $\ordcount{4\cdot p\cdot 3^k}{G}$ must divide the order
of $G$, and since $p - 1$ is a divisor of $\ordcount{p}{P}$,
it follows that
\begin{displaymath}
2^{e+2}\cdot 3^k\cdot(p - 1) \divides 2^{e+3}\cdot 3^k \cdot p^m.
\end{displaymath}
It is clear from this divisibility relation that $p - 1$ cannot
be a multiple of $3$, so we can write $p = 6n - 1$,
for some positive integer $n$.
Then $p - 1 = 6n - 2 = 2(3n - 1)$,
and $3n - 1$ is a divisor of $\order{G}$ coprime to both $3$ and $p$.
Consequently, we may write $3n - 1 = 2^{\alpha}$, for some positive integer $\alpha$.
Then $p - 1 = 2^{\alpha + 1}$ and we see that $\order{G} = 2^{e+2}\cdot 3^k\cdot p^m$
is divisible by $2^{e + \alpha + 3}$, which is impossible.
Therefore, we cannot have $\lambda = 1$, and $T$ is not cyclic.

Since $\lambda$ is odd, we must have $\lambda\geq 3$.
Because $\ordcount{3}{G} = 3^{\lambda} - 1$ is coprime to $3$
and is an even divisor of $\order{G}$,
hence, $3^{\lambda} - 1$ divides $2^{e+3}p^m$.
And, since $\lambda\geq 3$, it follows that $3^{\lambda} - 1$
is not a power of $2$ so, in fact, $2p$ divides $3^{\lambda} - 1$.
Since $\lambda$ is odd, hence, $1 + 3 + \cdots + 3^{\lambda - 1}$
is odd so we have,
\begin{displaymath}
\ordcount{3}{G} = 3^{\lambda} - 1 = 2p^{\alpha},
\end{displaymath}
for some integer $\alpha\geq 1$.
In particular, we have
\begin{displaymath}
3^{\lambda}\congruent 1\pmod{p}.
\end{displaymath}
From this, and from Fermat's little theorem,
we obtain that the order $s$ of $3$ modulo $p$ divides $p-1= \eulerphi(p)$ which,
in turn, is a divisor of $\order{G}$;
and also, $s$ divides $\lambda$.
Since $\gcd(p,p-1) = 1$ it follows that $p - 1$,
and hence also its divisor $s$,
is of the form $2^{\alpha}3^{\beta}$,
for suitable integers $\alpha,\beta\geq 0$.
But $s$ divides the odd integer $\lambda$,
so it must be that $s$ is a (positive) power of $3$.
(Note that $s\neq 1$ since $p$ is odd.)
In particular, $3$ divides $\lambda$, so we can write
$\lambda = 3\sigma$, where $\sigma\geq 1$ is an odd integer.
Now,
\begin{displaymath}
\ordcount{3}{G} = 3^{3\sigma} - 1 = 27^{\sigma} - 1 = 26(1 + 27 + \cdots + 27^{\sigma - 1}),
\end{displaymath}
so $13$ divides $\ordcount{3}{G}$, which divides $\order{G}$,
and it follows that $p = 13$.
But then $4$ divides $p - 1 = 12$,
$\ordcount{12p}{G}$
is divisible by $2^{e+4}$,
a final contradiction.

This completes the proof.
\end{proof}

We now know that $G$ is a $\{2,3\}$-group.
The final step is to prove that the Sylow $3$-subgroup $T$
of $G$ is cyclic, and that the rank of $E$ is either $0$ or $1$.
For this, we need the following elementary observation.

\begin{lemma}\label{lem:cons23}
Let $a$ and $b$ be non-negative integers.
If $2^a - 1 = 3^b$, then $(a,b)\in \{ (1,0), (2,1) \}$.
If $3^a - 1 = 2^b$, then $(a,b)\in \{ (1,1), (2,3) \}$.
\end{lemma}
\begin{proof}
Suppose that $2^a - 1 = 3^b$.
If $a = 2s$ is even, then $3^b = (2^s - 1)(2^s + 1)$.
If $2^s - 1 = 1$, then $s = 1$ and $a = 2$, so $3^b = 3$ and $b = 1$.
If $2^s > 1$, then $3$ divides both $2^s - 1$ and $2^s + 1$,
hence, $3$ also divides $(2^s + 1) - (2^s - 1) = 2$,
which is absurd.
If $a$ is odd, then $2^a \equiv 2\pmod{3}$,
so that $3^b = 2^a - 1 \equiv 1\pmod{3}$,
which is possible only if $b = 0$, in which case $a = 1$.

Now suppose that $3^a - 1 = 2^b$.
If $a = 2s$ is even, then $2^b = (3^s - 1)(3^s + 1)$,
so that $3^s - 1$ and $3^s + 1$ are powers of $2$.
Since only one of $3^s - 1$ and $3^s + 1$ can be divisible
by $4$, it follows that $2\in\{3^s - 1, 3^s + 1\}$.
If $3^s + 1 = 2$, then $s = 0$ and $3^s - 1 = 0$,
which is impossible.
Therefore, $3^s - 1 = 2$, so that $s = 1$ and $a = 2$,
and hence, $2^b = 8$, yielding $b = 3$.
If $a$ is odd, then $2^b = 2(1 + 3 + \cdots + 3^{a-1})$,
and $1 + 3 + \cdots 3^{a-1}$ is odd.
Therefore, $1 + 3 + \cdots + 3^{a-1} = 1$,
giving $a = 1$.
Hence $2^b = 2$, and so $b = 1$ also.
\end{proof}

\begin{lemma}\label{lem:step3}
Let $G = \quat\times\cyclic{2}^e\times{T}$,
where $T$ is an abelian group of order $3^k$, for a positive integer $k$.
If $G$ has perfect order classes, then $e\in\{0,1\}$
and $T$ is cyclic.
\end{lemma}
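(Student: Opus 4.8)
The plan is to extract three divisibility constraints from the hypothesis that $G$ has perfect order classes, using the counting formulae of Lemma~\ref{lem:ordcount}, and then to play them off against the $2$-adic valuation of $\order{G} = 2^{e+3}\cdot 3^k$. Recall from that lemma that $\ordcount{2}{G} = 2^{e+1} - 1$, that $\ordcount{3}{G} = \ordcount{3}{T}$, and that $\ordcount{12}{G} = 3\cdot 2^{e+1}\cdot\ordcount{3}{T}$, and note that $G$ genuinely has elements of orders $2$, $3$ and $12$, so each of these counts is a positive divisor of $\order{G}$.

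First I would pin down $e$. Since $\ordcount{2}{G} = 2^{e+1} - 1$ is an \emph{odd} divisor of $\order{G} = 2^{e+3}\cdot 3^k$, it must be a power of $3$, say $2^{e+1} - 1 = 3^{j}$. If $j = 0$ then $e = 0$. If $j\geq 1$, then $e+1\geq 2$ and Lemma~\ref{lem:conspp}, applied to $2^{e+1} - 1 = 3^{j}$, leaves only case~(ii) available: cases~(i) and~(iii) of that lemma require the primes $3$ and $2$ to appear on the opposite sides from where they sit here, so they cannot occur. Case~(ii) forces $j = 1$ and $e+1 = 2$, hence $e = 1$. Either way $e\in\{0,1\}$.

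Next I would show $T$ is cyclic. Let $\lambda\geq 1$ be the rank of the abelian $3$-group $T$; its elements of order dividing $3$ form a subgroup isomorphic to $\cyclic{3}^{\lambda}$, so $\ordcount{3}{T} = 3^{\lambda} - 1$, and therefore $\ordcount{3}{G} = 3^{\lambda} - 1$ divides $\order{G}$. Being coprime to $3$, it is a power of $2$, and Lemma~\ref{lem:conspp} applied to $3^{\lambda} - 1 = 2^{n}$ forces $\lambda\in\{1,2\}$ (now only cases~(i) and~(iii) can occur, yielding $\lambda = 2$ and $\lambda = 1$ respectively). Suppose, for contradiction, that $T$ is not cyclic; then $\lambda = 2$ and $\ordcount{3}{T} = 3^{2} - 1 = 8$, so
\begin{displaymath}
\ordcount{12}{G} = 3\cdot 2^{e+1}\cdot\ordcount{3}{T} = 3\cdot 2^{e+4}
\end{displaymath}
is divisible by $2^{e+4}$. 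But $\order{G} = 2^{e+3}\cdot 3^{k}$ is divisible by no power of $2$ beyond $2^{e+3}$, contradicting the fact that $\ordcount{12}{G}$ divides $\order{G}$. Hence $T$ is cyclic, which, together with $e\in\{0,1\}$, completes the proof.

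The only mildly delicate point is the bookkeeping when invoking Lemma~\ref{lem:conspp} twice with the two primes in swapped roles; once the surviving case is identified in each application, the comparison of $2$-adic valuations at order $12$ closes the argument in one line. One can sidestep Lemma~\ref{lem:conspp} for the bound on $\lambda$ altogether: if $\lambda$ is odd then $3^{\lambda} - 1 = 2\,(1 + 3 + \cdots + 3^{\lambda - 1})$ with the bracketed factor odd, so it is a power of $2$ only when $\lambda = 1$; and if $\lambda$ is even then $3^{\lambda}\congruent 1\pmod{8}$, so $\ordcount{3}{G}$ is divisible by $8$ and again $\ordcount{12}{G} = 3\cdot 2^{e+1}\cdot\ordcount{3}{G}$ is divisible by $2^{e+4}$, the same contradiction.
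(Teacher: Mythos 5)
Your proof is correct and follows essentially the same route as the paper: both pin down $e$ from the equation $2^{e+1}-1=3^{j}$ and force $T$ cyclic from $3^{\lambda}-1=2^{n}$, using Lemma~\ref{lem:conspp} (which the paper invokes implicitly when it asserts the solution sets). The only cosmetic difference is that the paper excludes $\lambda=2$ by citing the parity argument from Lemma~\ref{lem:step2}, whereas you exclude it directly via the $2$-adic valuation of $\ordcount{12}{G}$ --- which is the very same computation.
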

\begin{proof}
Since $\ordcount{2}{G} = 2^{e+1} - 1$ divides the order $\order{G} = 2^{e+3}3^k$
of $G$, and since $\ordcount{2}{G}$ is odd, therefore,
\begin{displaymath}
2^{e+1} - 1 = 3^n,
\end{displaymath}
for some non-negative integer $n$.
According to Lemma~\ref{lem:cons23},
the only solutions are $(e,n) = (0,0)$ and $(e,n) = (1,1)$.
Thus, $e\in\{0,1\}$, as claimed.

Next, the number of elements of order $3$ in $G$ has the form
\begin{displaymath}
\ordcount{3}{G} = 3^{\lambda} - 1,
\end{displaymath}
where $\lambda\geq 1$ is the rank of $T$.
As in the proof of Lemma~\ref{lem:step2}, the rank $\lambda$
must be odd.

Since $\ordcount{3}{G} = 3^{\lambda}-1$ is a divisor of $\order{G}$,
and is even, we must have
\begin{displaymath}
3^{\lambda} - 1 = 2^{\alpha}3^{\beta},
\end{displaymath}
for some integers $\alpha$ and $\beta$ with $\alpha\geq 1$,
and $\beta\geq 0$.
However, $3$ and $3^{\lambda} - 1$ are coprime,
which implies that $\beta = 0$ and $3^{\lambda} - 1  = 2^{\alpha}$.
By Lemma~\ref{lem:cons23},
the only solutions for this are $(\lambda,\alpha) = (1,1)$
and $(\lambda,\alpha) = (2,3)$.
Since $\lambda$ is odd, we must have $\lambda = 1$.
This implies that $T$ is cyclic, completing the proof.
\end{proof}

\begin{proof}[Proof of the Main Theorem]
From Lemma~\ref{lem:sufficiency} we see that Hamiltonian groups of the form
$\quat\times\cyclic{3^k}$ and $\quat\times\cyclic{2}\times\cyclic{3^k}$
have perfect order classes.
The converse follows from Lemmas~\ref{lem:step1},
\ref{lem:step2} and \ref{lem:step3}.
\end{proof}


\begin{bibdiv}
\begin{biblist}


\bib{Das2009b}{article}{
	author = {Ashish Kumar Das},
	title = {On Finite Groups Having Perfect Order Subsets},
	journal = {Int. J. Algebra},
	volume = {3},
	number = {13},
	year = {2009},
	pages = {629\ndash 637}
}

\bib{FinchJones2002}{article}{
	author = {{Carrie} {E}. {Finch} and {Lenny} {Jones}},
	title = {A Curious Connection Between {Fermat} Numbers and Finite Groups},
	journal = {Amer. Math. Monthly},
	volume = {109},
	number = {6},
	year = {2002},
	pages = {517\ndash 524}
}

\bib{FinchJones2003}{article}{
	author = {{Carrie} {E}. {Finch} and {Lenny} {Jones}},
	title = {Non-Abelian groups with perfect order subsets},
	journal = {The JP Journal of Algebra and Number Theory},
	volume = {3},
	number = {1},
	year = {2003},
	pages = {13\ndash 26}
}


\bib{JonesToppin2011}{article}{
	author = {Lenny Jones and Kelly Toppin},
	title = {On three questions concerning groups with perfect order subsets},
	journal = {Involve},
	volume = {4},
	number = {3},
	year = {2011},
	pages = {251\ndash 261}
}

\bib{Robinson}{book}{
	author={D.J.S. Robinson},
	title={A Course in the Theory of Groups},
	series={Graduate Texts in Mathematics},
	volume={80},
	publisher={Springer-Verlag},
	place={New York},
	date={1993}
}



\bib{Tarnauceanu2013}{article}{
	author	= {Marius T\u{a}rn\u{a}uceanu},
	title	= {Some combinatorial aspects of finite {Hamiltonian} groups},
	journal	= {Bull. Iranian Math. Soc.},
	volume	= {39},
	number	= {5},
	year	= {2013},
	pages	= {841\ndash 854}
}



\end{biblist}
\end{bibdiv}

\end{document}